\tikzset{cross/.style={cross out, draw=black, minimum size=2*(#1-\pgflinewidth), inner sep=0pt, outer sep=0pt},
cross/.default={1pt}}
\newcommand{\R}{\mathbb R}
\newtheorem{thm}{Theorem}[section]
\newtheorem{prop}[thm]{Proposition}
\DeclareMathOperator{\C}{\mathbb{C}}
\theoremstyle{definition}
\newtheorem{defn}[thm]{Definition}
\newtheorem{remark}[thm]{Remark}
\begin{document}
\title{Non-convexity of extremal length}
\author[Nathaniel Sagman]{Nathaniel Sagman}
\address{Nathaniel Sagman: University of Luxembourg, 2 Av. de l'Universite, 4365 Esch-sur-Alzette, Luxembourg.} \email{nathaniel.sagman@uni.lu}

\begin{abstract}
With respect to every Riemannian metric, the Teichm{\"u}ller metric, and the Thurston metric on Teichm{\"u}ller space, we show that there exist measured foliations on surfaces whose extremal length functions are not convex. The construction uses harmonic maps to $\R$-trees and minimal surfaces in $\R^n.$
\end{abstract}
\maketitle

\begin{section}{Introduction}
Let $\Sigma_g$ be a closed oriented surface of genus $g\geq 2,$ and let $\mathbf{T}_g$ be the Teichm{\"u}ller space of marked Riemann surface structures on $\Sigma_g.$ To any measured foliation $\mathcal{F}$ on $\Sigma_g$ we can associate the extremal length function $\mathbf{EL}_{\mathcal{F}}:\mathbf{T}_g\to (0,\infty).$ Extremal length functions play a large role in Teichm{\"u}ller theory. See, for instance, Kerckhoff's formula \cite[Theorem 4]{Ke} and the Gardiner-Masur compactification \cite{GMe}.

Liu-Su proved that $\mathbf{EL}_{\mathcal{F}}$ is plurisubharmonic, and Miyachi proved the stronger result that it is log-plurisubharmonic (see \cite{LW} and \cite{Mi}). Note that convexity with respect to a Riemannian metric implies plurisubharmonicity. Rafi-Lenhzen proved that, on Teichm{\"u}ller geodesics, extremal length is $K$-quasi-convex, but they also constructed a Teichm{\"u}ller geodesic along which the extremal length is not convex \cite{LR}. Continuing in this direction, Bourque-Rafi proved that the Teichm{\"u}ller metric admits non-convex balls by finding foliations and geodesics where the extremal length is not convex under any reparametrization \cite{BR} (see especially Lemma 1.2 in \cite{BR}).

In this note, we extend the non-convexity result of Rafi-Lenhzen \cite{LR}. Let $\mathcal{C}$ denote the class of (possibly asymmetric) Finsler metrics on $\mathbf{T}_g$ such that for every point $S$ in $\mathbf{T}_g$ and every tangent vector $\mu$ at that point, there is a $C^2$ geodesic starting at $S$ and tangent to $\mu$ at time zero. $\mathcal{C}$ includes every Riemanian metric, notably the Weil-Petersson metric, but also the Teichm{\"u}ller metric and the Thurston metric. Rafi-Lenhzen build an explicit foliation and a Teichm{\"u}ller ray that has pieces along which the slope of the extremal length function decreases. In contrast, we show that convexity fails at an infinitesimal level.
\begin{thm}\label{main}
For all $g\geq 2$ and $m\in \mathcal{C}$, there exists a measured foliation $\mathcal{F}$ on $\Sigma_g$ with real analytic extremal length function and a geodesic $t\mapsto S_t$ for $m$ with the property that $$\frac{d^2}{dt^2}|_{t=0}\mathbf{EL}_{\mathcal{F}}(S_t) <0.$$
In particular, $\mathbf{EL}_{\mathcal{F}}$ is not convex with respect to $m$.
\end{thm}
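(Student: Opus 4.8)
The plan is to reduce the theorem to the existence of an unstable (branched) minimal surface in a flat torus, and then to exploit that, at a critical point, the second derivative of a function along \emph{any} $C^2$ curve equals the intrinsic Hessian and hence does not see the choice of $m\in\mathcal{C}$. The starting point is the link between extremal length and harmonic energy: for a measured foliation $\mathcal{G}$ on $\Sigma_g$ and $S\in\mathbf{T}_g$, writing $q_\mathcal{G}(S)$ for the Hubbard--Masur differential, one has $\mathbf{EL}_\mathcal{G}(S)=\|q_\mathcal{G}(S)\|_1$, and by Wolf's theory this equals, up to a universal constant, the energy $E(u_S)$ of the equivariant harmonic map $u_S\colon\widetilde{S}\to T_\mathcal{G}$ to the dual $\R$-tree, whose Hopf differential is $\pm q_\mathcal{G}(S)$. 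By Gardiner's variational formula, $d(\mathbf{EL}_\mathcal{G})_S$ is a nonzero real multiple of the pairing against $q_\mathcal{G}(S)$; since $q_\mathcal{G}(S)\neq 0$, the function $\mathbf{EL}_\mathcal{G}$ has no critical points, so a single foliation cannot be used directly and one must combine several.

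Next I would fix a Riemann surface $S_0$ together with a conformal harmonic map $v=(v_1,\dots,v_n)\colon S_0\to\R^n/\Lambda$, i.e.\ a (branched) minimal surface in a flat torus. Each coordinate $v_j\colon S_0\to\R/\Lambda_j$ is a circle-valued harmonic map whose level-set measured foliation $\mathcal{F}_j$ is orientable, with Hubbard--Masur differential $q_{\mathcal{F}_j}(S_0)=(\partial v_j)^2=:q_j$, and with $\mathbf{EL}_{\mathcal{F}_j}(S)$ equal, up to the same universal constant, to $E(v_{j,S})$ for the harmonic map $v_{j,S}\colon S\to\R/\Lambda_j$ in the fixed homotopy class. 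Conformality of $v$ says precisely that $\sum_j q_j=0$. Putting $\mathcal{E}(S):=\sum_j\mathbf{EL}_{\mathcal{F}_j}(S)$, which up to a constant is $E(v_S)$ for $v_S\colon S\to\R^n/\Lambda$ harmonic in the fixed class, Gardiner's formula gives that $d\mathcal{E}_{S_0}$ is a multiple of the pairing against $\sum_j q_j=0$; hence $S_0$ is a critical point of the real-analytic function $\mathcal{E}$ — the classical statement that critical points of energy over $\mathbf{T}_g$ are exactly the conformal (minimal) harmonic maps.

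Now suppose $v$ is \emph{unstable}: there is a smooth family $v_t$ with $v_0=v$ and $\frac{d^2}{dt^2}\big|_0\mathrm{Area}(v_t)<0$. The induced conformal classes $S_t:=[v_t^*(\cdot)]$ form a smooth path in $\mathbf{T}_g$, and $\mathcal{E}(S_t)\le\mathrm{Area}(v_t)$ with equality at $t=0$ (energy dominates area, with equality for conformal maps). Expanding both sides to second order and using that $S_0$ is a critical point of $\mathcal{E}$ forces $\mu:=\dot S_0\neq 0$ and $\mathrm{Hess}_{S_0}\mathcal{E}(\mu,\mu)\le\frac{d^2}{dt^2}\big|_0\mathrm{Area}(v_t)<0$. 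Given $m\in\mathcal{C}$, let $t\mapsto S_t$ now denote the $C^2$ $m$-geodesic with $S_0$ and $\dot S_0=\mu$; since $S_0$ is critical for the $C^2$ function $\mathcal{E}$, the acceleration term drops out and $\frac{d^2}{dt^2}\big|_0\mathcal{E}(S_t)=\mathrm{Hess}_{S_0}\mathcal{E}(\mu,\mu)<0$, so $\frac{d^2}{dt^2}\big|_0\mathbf{EL}_{\mathcal{F}_j}(S_t)<0$ for some $j=j(m)$; take $\mathcal{F}=\mathcal{F}_{j(m)}$. Because $\mathcal{F}_j$ is orientable, $q_{\mathcal{F}_j}(S)$ is the square of the holomorphic $1$-form on $S$ with prescribed periods, which varies real-analytically with $S$, so $\mathbf{EL}_\mathcal{F}(S)=\|q_\mathcal{F}(S)\|_1$ is real analytic, as claimed.

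The genuinely hard part — and the point at which minimal surfaces in $\R^n$, and through degeneration to products of $\R$-trees the harmonic maps to trees, enter essentially — is to produce, for every $g\ge2$, a Riemann surface of genus $g$ carrying an unstable (branched) minimal surface into a flat torus $\R^n/\Lambda$. I would look for an explicit periodic minimal surface with a known negative direction in its second variation of area, and then use a handle-attachment/stabilization to realize all genera $g\ge2$; verifying that the negative direction survives as a negative direction of $\mathrm{Hess}_{S_0}\mathcal{E}$ is exactly the content of the energy-versus-area comparison above, so the whole difficulty is concentrated in exhibiting the unstable example.
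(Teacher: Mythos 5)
Your reduction is sound and is essentially the paper's strategy (extremal length as energy of harmonic maps to dual $\R$-trees, instability of a minimal map giving a direction in which the sum of energies drops to second order, criticality making the second derivative depend only on the initial tangent vector so that any $C^2$ $m$-geodesic can be substituted, then selecting one component foliation). Your restriction to orientable foliations is even a genuine simplification where it applies: since the Hubbard--Masur differential is then the square of the abelian differential with prescribed periods, real analyticity of $\mathbf{EL}_{\mathcal{F}_j}$ comes from Hodge theory, and you avoid the paper's perturbation-to-generic-differentials step (its Proposition \ref{perturb}, proved via the Reich--Strebel formula), which the paper needs precisely because $C^2$ regularity of extremal length is not known in general.

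However, there is a genuine gap, and it is exactly at the point you defer: the existence of the unstable example, "for every $g\ge 2$," of a minimal surface in a flat torus. For $g\ge 3$ such examples exist (e.g.\ lifts of unstable triply periodic minimal surfaces, or non-planar equivariant minimal maps to $\R^3$ destabilized by the unit normal, as in \cite[Section 5.3]{MSS}), but for $g=2$ your proposed class of examples provably does not exist: every equivariant minimal map of a genus $2$ surface to $\R^n$ (equivalently, every minimal genus $2$ surface in a flat torus, i.e.\ the orientable-foliation situation your argument requires) is stable. A "handle-attachment/stabilization" cannot rescue this, since genus $2$ is the bottom case, not one reached by adding handles. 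This is why the paper replaces your setup, in genus $2$, by Theorem \ref{g2}: a $\tau$-unstable minimal map from a nontrivial branched cover $\tau\colon C\to S$, where the quadratic differentials $\phi_i$ on $S$ are not squares and the foliations are not orientable; the destabilizing variation must be taken $\tau$-invariantly ($\textrm{Var}_\tau(h)$), pushed down via the self-maps construction (Proposition \ref{selfmapprop}), and then the differentials must be perturbed to have simple zeros (Proposition \ref{perturb}) to recover the real analyticity and the legitimacy of the second-order computation that orientability gave you for free. So your argument as written proves the theorem only for $g\ge 3$ (and even there the unstable example is asserted rather than produced); the $g=2$ case requires the branched-cover and genericity machinery you omitted.
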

As noted in \cite{M2}, it follows from the main result of \cite{M2} that with respect to every Riemannian metric on $\mathbf{T}_g$, the energy functional for harmonic maps associated with a Fuchsian representation can be non-convex. By the paper \cite{SS}, the same result holds for (non-Fuchsian) Hitchin representations. We prove Theorem \ref{main} by interpreting extremal length as an energy. Drawing from recent work on minimal surfaces (see \cite{M1},\cite{M2},\cite{MS},\cite{MSS},\cite{SS}), we establish a link between non-convexity of extremal length and instability of minimal surfaces in $\R^n$. 

One of the main takeaways of the proof is that a destabilizing variation of an equivariant minimal surface in $\R^n$ produces a foliation (or even a number of foliations) whose extremal length can be lowered to second order. And although it's probably difficult in practice, if one has the explicit minimal surface data, then one could compute quantities associated with the extremal length (see Remark \ref{end}). 

It would require some care, but one could try to use minimal surfaces to construct a foliation and a geodesic (for some metric) such that, in restriction to the geodesic, the extremal length has a local maximum at time zero. This would imply that the extremal length is not convexoidal for the metric. In fact, it is conjectured in \cite{FMP} that the extremal length systole attains a local maximum at the regular octahedron punctured at its vertices, which would imply that Vorono{\"i}'s criterion fails for the extremal length systole, and moreover that extremal length is not convexoidal for any metric in $\mathcal{C}$ (see \cite[Definition 1.4 and Proposition 1.5]{Bav} for definitions and justification).

Finally, let us remark that a number of questions remain open related to convexity in Teichm{\"u}ller geometry. It is not known if the Teichm{\"u}ller metric convex hull of $3$ points in $\mathbf{T}_g$ can be all of $\mathbf{T}_g$. While sufficiently small Teichm{\"u}ller balls are always convex (the analogous fact holds for any Finsler metric), it is unclear if sets of the form $\{S\in \mathbf{T}_g: \mathbf{EL}_{\mathcal{F}}(S)<\alpha\}$, referred to as horoballs in \cite{BR}, are convex for $\alpha$ small. Our proof of Theorem \ref{main} suggests a new way to probe the convexity question for such horoballs.

\subsection{Acknowledgements}
I'd like to thank Kasra Rafi and Maxime Fortier Bourque for discussion on this topic. I'd also like to thank the anonymous referee for catching some minor errors and sharing helpful comments. I am funded by the FNR grant O20/14766753, \it{Convex Surfaces in Hyperbolic Geometry.}

\end{section}

\begin{section}{Preliminaries}

\subsection{Measured foliations}
Let $\mathcal{S}$ be the set of non-trivial homotopy classes of simple closed curves on $\Sigma_g$, and $\R^{\mathcal{S}}$ the product space with the weak topology. Any $\gamma\in \mathcal{S}$ determines a point in $\R^{\mathcal{S}}$ through the intersection number, 
\begin{equation}\label{int}
    \gamma\mapsto (i(\gamma,\alpha))_{\alpha\in \mathcal{S}}.
\end{equation}
A weighted multicurve is a formal positive linear combination of classes in $\mathcal{S}$. We extend the intersection number to the space of weighted multicurves $\mathcal{WS}$ by $$i\Big (\sum_{j=1}^na_j\gamma_j, \sum_{k=1}^m b_k\alpha_k\Big)= \sum_{j=1}^n \sum_{k=1}^m a_jb_ki(\gamma_j,\alpha_k),$$ which as above yields an embedding from $\mathcal{WS}$ into $\R^{\mathcal{S}}$ via the same map (\ref{int}). To us, the space of measured foliations $\mathcal{MF}$ is the closure of $\mathcal{WS}$ in $\R^{\mathcal{S}}$. Note that the intersection number extends continuously to $\mathcal{MF}$ \cite{Thbook}.

Alternatively, a measured foliation $\mathcal{F}$ is a singular foliation on $\Sigma_g$, the singularities being $k$-prongs, $k\geq 3$, equipped with a transverse measure: an absolutely continuous measure defined on arcs transverse to the foliation and which is invariant under leaf-preserving isotopy. Two measured foliations are measure equivalent if they differ by a leaf-preserving isotopy and Whitehead moves. See \cite[Expos{\'e} 5]{Thbook} for the precise definitions. The intersection function is defined on simple closed curves by integration against the transverse measure.

 Let $S$ be a Riemann surface structure on $\Sigma_g$. The vertical (resp. horizontal) foliation of a holomorphic quadratic differential $\phi$ on $S$ is the singular foliation whose leaves are the integral curves of the line field on $S\backslash \phi^{-1}(0)$ on which $\phi$ is a negative (resp. positive) real number. The singularities are indeed prongs at the zeros, with a zero of order $k$ corresponding to a prong with $k+2$ segments. Both foliations come with transverse measures determined by $|\textrm{Re}\sqrt{\phi}|$ and $|\textrm{Im}\sqrt{\phi}|$ respectively. In this paper, we will always use the vertical foliation.
    
    The Hubbard-Masur theorem asserts that on the given Riemann surface $S$, every measured foliation $\mathcal{F}$ is measure equivalent to one arising from the construction above \cite{HMt}. We refer to the corresponding differential $\phi$ as the Hubbard-Masur differential.

    \subsection{Extremal length (and its regularity)}\label{reg}
    Let $S$ be a Riemann surface structure on $\Sigma_g,$ and $A\subset S$ a doubly connected domain, conformally equivalent to an annulus $\{z\in \C: 1<|z|<R\}.$ The modulus of $A$ is the quantity $$\textrm{Mod}(A) = \frac{1}{2\pi}\log R.$$
    \begin{defn}
The extremal length of a homotopically non-trivial simple closed curve $\gamma$ with respect to $S$ is $$\textrm{EL}(S,\gamma)=\inf_A \frac{1}{\textrm{Mod}(A)},$$ where the infimum is taken over all doubly connected domains $A$ homotopic to $\gamma.$
\end{defn}
     Given a weighted multicurve $\gamma=\sum_{j=1}^na_j\gamma_j,$ let $\mathcal{A}$ be the set of conformally embedded unions of annuli $A=\cup_{i=1}^n A_i\subset S$, with $A_i$ homotopic to $\gamma_i$. We define $$\textrm{EL}(S,\gamma)=\inf_{A\in \mathcal{A}}\sum_{j=1}^n \frac{a_j^2}{\textrm{Mod}(A_i)}$$  (compare with \cite[Definition 3.4, Proposition 3.7]{KPT}). Kerckhoff showed that the map $\textrm{EL}(S,\cdot)$ extends continuously to all measured foliations, defining a map $\textrm{EL}(S,\cdot):\mathcal{MF}\to (0,\infty)$ \cite{Ke}. 

    Fix a measured foliation $\mathcal{F}$ on $\Sigma_g.$ We define the extremal length function on Teichm{\"u}ller space, $\mathbf{EL}_{\mathcal{F}}:\mathbf{T}_g\to (0,\infty)$,  by $$\mathbf{EL}_{\mathcal{F}}(S) = \textrm{EL}(S,\mathcal{F}).$$ 
    In terms of the Hubbard-Masur differential $\phi$, the extremal length is the $L^1$ norm: $$\mathbf{EL}_{\mathcal{F}}(S) = \int_S |\phi|.$$ Recall that the tangent space of $\mathbf{T}_g$ at a surface $S$ identifies with the vector space of harmonic Beltrami forms on $S$. By direct computation, $\mathbf{EL}_{\mathcal{F}}$ is $C^1$, and the derivative is given by $$d(\mathbf{EL}_{\mathcal{F}})_S(\mu)=-4\textrm{Re}\int\phi \mu.$$
    From our understanding, it is unknown if $\mathbf{EL}_{\mathcal{F}}$ is $C^2$, and we'll have to address this point in the main proof. 
    Royden's computation in \cite[Lemma 1]{Ro} seems relevant to this problem, and the papers \cite{Re1} and \cite{Re2} suggest that it is at most $C^2.$ Around a point in $\mathbf{T}_g$ where all zeros of the Hubbard-Masur differential are simple, $\mathbf{EL}_{\mathcal{F}}$ is real analytic (see \cite{Ma}). This condition is generic, and guaranteed when $\mathcal{F}$ has only $3$-pronged singularities and no saddle connections. We do not pursue the general regularity question in the current paper. 
 
    \subsection{Harmonic maps}\label{hmaps}
    We plan to interpret extremal length in terms of harmonic maps to $\R$-trees. As above, let $S$ be a Riemann surface structure on $\Sigma_g$ and $\nu$ a smooth metric that is conformal with respect to the complex structure. Let $(M,d)$ be a complete and non-positively curved (NPC) length space equipped with an action $\rho:\pi_1(\Sigma_g)\to\textrm{Isom}(M,d).$ Let $\tilde{S}$ be the universal cover and $h:\tilde{S}\to (M,d)$ a $\rho$-equivariant and Lipschitz map. Korevaar-Schoen \cite[Theorem 2.3.2]{KS} associate a locally $L^1$ measurable metric $g=g(h)$, defined on pairs of Lipschitz vector fields. If $h$ is a $C^1$ map to a smooth Riemannian manifold $(M,\sigma)$, and the distance $d$ is induced by a Riemannian metric $\sigma$, then $g(h)$ is represented by the pullback metric $h^*\sigma$. Since $\rho$ is acting by isometries, the tensor $g(h)$ descends to $S$. Henceforth we consider it a function on $S$. The energy density is the locally $L^1$ function $$e(h)=\frac{1}{2}\textrm{trace}_\nu g(h).$$
The total energy is $$\mathcal{E}(S,h) = \int_S e(h)dA,$$
where $dA$ is the area form of $\nu$. The measurable $2$-form $e(h)dA$ does not depend on the choice of compatible metric $\nu$, but only on the complex structure. 
\begin{defn}
$h$ is harmonic if it is a critical point for the energy $h\mapsto \mathcal{E}(S,h)$. 
\end{defn}
Let $g_{ij}(h)$ be the components of $g(h)$ in a holomorphic local coordinate $z=x_1+ix_2$. The Hopf differential of $h$ is the measurable tensor on $S$ given in the local coordinate by
\begin{equation}\label{mhopf}
\phi(h)(z)=\frac{1}{4}(g_{11}(h)(z)-g_{22}(h)(z)-2ig_{12}(h)(z))dz^2.
\end{equation}
In the Riemannian setting, (\ref{mhopf}) is 
$$
\phi(h)(z) = h^*\sigma\Big (\frac{\partial}{\partial z},\frac{\partial}{\partial z}\Big )(z)dz^2.$$
When $h$ is harmonic, even in the metric space setting, the Hopf differential is represented by a holomorphic quadratic differential. 

Assume that $\rho$ has the following property: for any Riemann surface $S$ representing a point in $\mathbf{T}_g$, there is a unique $\rho$-equivariant harmonic map $h:\tilde{S}\to (M,d)$. The energy functional on Teichm{\"u}ller space $\mathbf{E}_\rho:\mathbf{T}_g\to [0,\infty)$ is defined by $$\mathbf{E}_\rho(S)=\mathcal{E}(S,h).$$ When $\mathbf{E}_\rho$ is $C^1$ and the associated harmonic map has Hopf differential $\phi$, the derivative in the direction of a harmonic Beltrami form $\mu$ is 
\begin{equation}\label{dformula}
    d(\mathbf{E}_{\rho})_S(\mu)=-4\textrm{Re}\int\phi \mu.
\end{equation}
See \cite{We} for the proof in the metric space context.

\subsection{$\R$-trees dual to foliations}\label{dualtrees}
    \begin{defn}
An $\mathbb{R}$-tree is a length space $(T,d)$ such that any two points are connected by a unique arc, and every arc is a geodesic, isometric to a segment in $\mathbb{R}$.
\end{defn}
Under this definition, $\R$-trees need not be complete. However, every $\R$-tree isometrically embeds into its completion, which itself is an NPC $\R$-tree \cite[Theorem II.1.9]{MSha}. Going forward, we will implicitly embed all $\R$-trees inside their completions and extend all isometries to the completions, in order to discuss equivariant harmonic maps to $\R$-trees.

We concern ourselves with a particular class of actions on $\R$-trees, obtained as follows. Let $\mathcal{F}$ be a measured foliation on $S$ with Hubbard-Masur differential $\phi$. Lifting $\mathcal{F}$ to the universal cover $\tilde{S}$, we define an equivalence relation on $\tilde{S}$ by $x\sim y$ if $x$ and $y$ lie on the same leaf. The quotient space $\tilde{S}/\sim$ is denoted $T$. Pushing the transverse measure down via the projection $\pi: \tilde{S}\to T$ yields a distance function $d$ that turns $(T,d)$ into an $\mathbb{R}$-tree, with an induced action $\rho:\pi_1(\Sigma_g)\to \textrm{Isom}(T,d).$ Under this distance, the projection map $\pi: \tilde{S}\to (T,d)$ is $\rho$-equivariant and harmonic, and the Hopf differential is exactly $\phi/4$ (see \cite[Section 3]{W}). Note that for generic foliations, $(T,d)$ is not complete \cite[Lemma 9.5]{RMG}, so we are indeed using the completion of $(T,d)$. 

The energy density of $\pi$ can be described explicitly: at a point $p\in\tilde{S}$ on which $\phi(p)\neq 0$, $\pi$ locally isometrically factors through a segment in $\mathbb{R}$. In a small neighbourhood around that point, $g(h)$ is represented by the pullback metric of the locally defined map to $\mathbb{R}$. Therefore, it can be computed that
\begin{equation}\label{enho}
    e(\pi)=\nu^{-1}|\phi|/2.
\end{equation}
Similarly, this provides one way to compute $\phi(\pi)=\phi/4$. In view of (\ref{enho}), we will always rescale the metric on $T$ from $(T,d)$ to $(T,2d).$ In this normalization, the total energy is 
\begin{equation}\label{toten}
    \mathcal{E}(S,\pi)=\int_S |\phi|.
\end{equation}
Keeping $\rho$ and varying the source Riemann surface, $\rho$-equivariant harmonic maps always exist and are unique \cite{Wf}, and hence there is an energy functional $\mathbf{E}_\rho$. 
From the formula (\ref{toten}), we deduce the following.
\begin{prop}\label{relator}
Let $\mathcal{F}$ be a measured foliation with Hubbard-Masur differential $\phi$, and $\rho$ the action on the $\R$-tree dual to $\mathcal{F}$. As functions on $\mathbf{T}_g,$ $\mathbf{E}_\rho=\mathbf{EL}_{\mathcal{F}}.$ 
\end{prop}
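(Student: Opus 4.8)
The plan is to compare the two functions by comparing their values pointwise on $\mathbf{T}_g$, using the explicit $L^1$-norm descriptions already assembled in the excerpt. Recall that in the normalization $(T,2d)$, equation (\ref{toten}) states that the harmonic projection map $\pi:\tilde S\to(T,2d)$ has total energy $\mathcal{E}(S,\pi)=\int_S|\phi|$, where $\phi$ is the Hubbard-Masur differential of $\mathcal{F}$ on $S$. On the other hand, the subsection on extremal length records that $\mathbf{EL}_{\mathcal{F}}(S)=\int_S|\phi|$ as well. So at the level of a single Riemann surface $S$ the two quantities literally agree, and the proposition is just the statement that this holds for every $S$.

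The key steps, in order, are as follows. First, fix a point $S\in\mathbf{T}_g$. Second, invoke the Hubbard-Masur theorem (stated in the excerpt) to obtain the Hubbard-Masur differential $\phi=\phi(S,\mathcal{F})$ representing $\mathcal{F}$ on $S$; note that $\phi$ depends on $S$, but it is the unique holomorphic quadratic differential whose vertical foliation is measure-equivalent to $\mathcal{F}$. Third, recall from the extremal length subsection that $\mathbf{EL}_{\mathcal{F}}(S)=\mathrm{EL}(S,\mathcal{F})=\int_S|\phi|$. Fourth, form the $\R$-tree $(T,d)$ dual to $\mathcal{F}$ as in Section \ref{dualtrees}, with its isometric action $\rho$; by Wolf's results the projection $\pi:\tilde S\to(T,d)$ is $\rho$-equivariant and harmonic with Hopf differential $\phi/4$, and by uniqueness of equivariant harmonic maps (\cite{Wf}) this $\pi$ is \emph{the} harmonic map computing $\mathbf{E}_\rho(S)$. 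Fifth, adopt the normalization $(T,2d)$ and apply (\ref{toten}) to conclude $\mathbf{E}_\rho(S)=\mathcal{E}(S,\pi)=\int_S|\phi|$. Sixth, combine the third and fifth steps: $\mathbf{E}_\rho(S)=\mathbf{EL}_{\mathcal{F}}(S)$. Since $S$ was arbitrary, the two functions on $\mathbf{T}_g$ coincide.

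There is no serious obstacle; this is a bookkeeping proof that stitches together results quoted earlier (Hubbard-Masur, Kerckhoff's $L^1$ formula, Wolf's identification of the harmonic map to the dual tree and its Hopf differential, and existence/uniqueness of equivariant harmonic maps to $\R$-trees). The one point deserving a sentence of care is the matching of normalizations: the raw distance $d$ gives energy density $e(\pi)=\nu^{-1}|\phi|/2$ by (\ref{enho}) and Hopf differential $\phi/4$, whereas rescaling to $2d$ multiplies the energy density by $4$, yielding $\mathcal{E}(S,\pi)=\int_S 2|\phi|\,? $ --- so one must be careful to track the exact constant and confirm it lands on $\int_S|\phi|$ as claimed in (\ref{toten}), and correspondingly that the convention makes the Hopf differential of the rescaled map equal to $\phi$ (matching the Hubbard-Masur differential) rather than $\phi/4$. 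A second, essentially trivial, point to mention is that the harmonic map used to define $\mathbf{E}_\rho(S)$ is the \emph{unique} equivariant harmonic map, so there is no ambiguity in identifying it with $\pi$. Once these conventions are pinned down, the equality $\mathbf{E}_\rho=\mathbf{EL}_{\mathcal{F}}$ is immediate.
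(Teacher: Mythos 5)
Correct, and this is essentially the paper's own (one-line) argument: Proposition \ref{relator} is deduced pointwise from (\ref{toten}) together with the fact that on each Riemann surface the unique $\rho$-equivariant harmonic map to the dual tree is the projection attached to the Hubbard--Masur differential of $\mathcal{F}$ on that surface, so the energy at every point equals $\int_S|\phi|=\mathbf{EL}_{\mathcal{F}}(S)$. Your normalization worry is arithmetically sound---literally replacing $d$ by $2d$ quadruples the energy density and gives $2\int_S|\phi|$, so the rescaling that realizes (\ref{toten}) is by $\sqrt{2}$ (equivalently, doubling the Korevaar--Schoen tensor)---but since $\mathbf{E}_\rho$ is defined with whichever normalization makes (\ref{toten}) hold, and an overall positive constant would in any case not affect the sign of second derivatives, this does not create a gap in your proof or in the proposition.
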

Accordingly, the same discussion on regularity from Section \ref{reg} applies to $\mathbf{E}_\rho.$ From this point on, we will think about extremal length solely in terms of harmonic maps to $\R$-trees.
\end{section}

\begin{section}{Non-convexity}
Let $m$ be a metric distance function on $\mathbf{T}_g$ in which $(\mathbf{T}_g,m)$ is a length space. We say that a function $F:\mathbf{T}_g\to \R$ is convex with respect to $m$ if for all geodesics $c:[0,1]\to\mathbf{T}_g$, the function $F\circ c:[0,1]\to \R$ is convex. If $c$ is $C^2$ and $F$ is $C^2$ around the image of $c$, then $F\circ c$ is convex if and only if the second derivative is non-negative at all points.

After discussing metrics on $\mathbf{T}_g$ in \ref{metrics}, we recall constructions from \cite{MSS} relating variations of minimal surfaces in $\R^n$ to variations of minimal maps to products of $\R$-trees. We will then use minimal surfaces in $\R^n$ to find a harmonic map to an $\R$-tree (or, a measured foliation) whose energy (extremal length) can be lowered to second order.

    \subsection{Metrics on $\mathbf{T}_g$}\label{metrics}
  Recall the class of metrics $\mathcal{C}$ from the introduction. Let's briefly justify that the Teichm{\"u}ller metric $d_{T}$ and the Thurston metric $d_{Th}$ are contained in $\mathcal{C}.$ This may follow from a general theory of asymmetric Finsler metrics with certain properties, but we couldn't find a source and we prefer to be hands-on.
  \begin{defn}
      The Teichm{\"u}ller metric is defined by $d_T(S,S')=\inf_g \log K(g),$ where $K(g)$ is the maximum quasiconformal dilatation of a quasiconformal map $g:S\to S'$.
  \end{defn}
 Recall that at a Riemann surface $S$, $T_S\mathbf{T}_g$ identifies with the space of harmonic Beltrami forms on $S$. If $\mu$ is any such Beltrami form, away from the finite zero set of $\mu$ we can locally choose a coordinate $z=x+iy$ in which $\mu=d\overline{z}/dz.$ The Teichm{\"u}ller mapping in the direction of $\mu$ at scale $K$ is defined in such a coordinate by 
 \begin{equation}\label{teichmap}
     f_{\mu,K}(x,y) =K^{1/2}x+K^{-1/2}y
 \end{equation}
We define $f_{\mu,K}$ globally by doing (\ref{teichmap}) over the local patches, and extending to all of $S$ by continuity. The Teichm{\"u}ller ray $K\mapsto f_{\mu,K}$ is a geodesic for $d_T$ tangent to $\mu$ at $K=0.$
\begin{defn}
    The Thurston metric is defined by $d_{Th}(S,S')=\inf_g\log \textrm{Lip}(g),$ where $\textrm{Lip}(g)$ is the Lipschitz constant of a Lipschitz map $g$ taking $S$ to $S'$.
\end{defn}
We couldn't find a clean statement in the literature about the existence of Thurston geodesics in a given tangent direction. The result can probably be established through the constructions of Thurston's original paper \cite{Thu}, but one cannot use Thurston's stretch lines from \cite{Thu} directly: by \cite[Theorem 10.5]{Thu}, the set of directions in the unit tangent bundle of $\mathbf{T}_g$ which are tangent to stretch lines have Hausdorff dimension $0$. We'll instead cite the recent work of Pan and Wolf \cite{PW}.

For any Riemann surface $S$ and projective measured lamination on $S$, Pan and Wolf construct a ``harmonic stretch line," which is a Thurston geodesic that in some sense solves an energy-minimization problem. Every unit tangent vector to $\mathbf{T}_g$ at $S$ is tangent to a harmonic stretch line \cite[Remark 1.12]{PW}. Moreover, harmonic stretch lines are special examples of ``piecewise harmonic stretch lines," which are, as stated in Theorem 1.7 of \cite{PW}, real analytic paths in Teichm{\"u}ller space.

\subsection{Harmonic functions}\label{hfunctions}
Let $S$ be a Riemann surface structure on $\Sigma_g$ and $\phi$ a holomorphic quadratic differential. Let $(T,2d)$ be the dual $\R$-tree with action $\rho$ and harmonic projection map $\pi:\tilde{S}\to (T,2d)$. Assume that $\phi$ is the square of an abelian differential $\alpha$. The cohomology class of the harmonic $1$-form $\textrm{Re}\alpha$ determines a representation $\chi:\pi_1(\Sigma_g)\to (\mathbb{R},+),$ and integrating from a basepoint $p\in \tilde{S}$ yields a $\chi$-equivariant harmonic function $$h:\tilde{S}\to \R, \hspace{1mm} f_i(z) = \int_p^z \textrm{Re}\alpha.$$ 
We can compute directly that $\phi(h)=\phi$ and that for any choice of conformal metric on $S$, $e(h)=e(\pi)$.
Geometrically, $h$ is related to $\pi$ through the folding map $p$, which is a map $p:(T,d)\to \R$ satisfying $h=p\circ \pi$ and restricting to an isometry on geodesic segments of $(T,d)$ (see \cite[Section 4]{MS}). For any Riemann surface, $\chi$-equivariant harmonic functions exist, but they are unique only up to translations of $\R^n$. Nevertheless, the energy density is independent of the choice of harmonic function, and it is possible to choose the harmonic functions locally to vary real analytically with the choice of Riemann surface (see \cite[Section 5]{EL}). Thus, as in \ref{hmaps}, we may define a (real analytic) energy functional $\mathbf{E}_\chi$ on $\mathbf{T}_g.$

In general, there is a degree $2$ branched covering $\tau: C\to S$ in which $\phi$ lifts to a square $\tilde{\phi}=\alpha^2$, disconnected if $\phi$ is already a square, and which has the universal property that any other branched cover on which $\phi$ lifts to a square must factor through $\tau$. We can repeat the above construction on $C,$ obtaining an equivariant harmonic map to an $\R$-tree $\pi':\tilde{C}\to (T',2d')$ that folds onto an equivariant harmonic function $h$ from $\tilde{C}$ to $\R$. $C$ comes with a holomorphic involution that negates $\alpha$. The involution leaves the energy densities of $\pi'$ and $f$ invariant, so that they descend all the way to $S$, where they agree with that of the map to the original $\R$-tree.
   
     \subsection{Minimal maps}\label{minimal}
     Classically, a minimal map to a Riemannian manifold is a harmonic and conformal immersion. An immersion is conformal precisely when the Hopf differential vanishes identically. For an NPC space $(M,d)$, we make the following definition.
    \begin{defn}
        $h:\tilde{S}\to (M,d)$ is minimal if it is harmonic and $\phi(h)=0.$
    \end{defn}
In the presence of a $C^1$ energy functional $\mathbf{E}_\rho,$ by (\ref{dformula}), $S$ is minimal if and only if $S$ is a critical point of $\mathbf{E}_{\rho}.$ For equivariant maps to $\R^n$, minimal maps are also critical points of the area functional $$f\mapsto A(f) = \int_{\Sigma_g} dA_f,$$ where $dA_f$ is the area form of the pullback of the Euclidean metric by $f$. We record the following consequence of the definitions. Let $(X,d)$ be a product of NPC spaces $(M_i,d_i).$ 
\begin{prop}\label{product}
    $h:\tilde{S}\to (X,d)$ is harmonic if and only if every component map $h_i:\tilde{S}\to (M_i,d_i)$ is harmonic. Moreover, $\phi(h)=\sum_{i=1}^n \phi(h_i).$
\end{prop}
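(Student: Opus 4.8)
The plan is to reduce everything to additivity of the energy density under products. First I would record that for the product NPC space $(X,d)=\prod_{i=1}^n(M_i,d_i)$ the squared distance decomposes as $d^2(x,y)=\sum_{i=1}^n d_i^2(x_i,y_i)$. Inserting this into the Korevaar--Schoen construction of the pullback tensor $g(h)$ from \cite{KS} --- which is assembled from the directional energy densities built out of approximate difference quotients of $d(h(\cdot),h(\cdot))$ --- one sees directly that $g(h)=\sum_{i=1}^n g(h_i)$ as locally $L^1$ measurable quadratic forms on Lipschitz vector fields. Consequently $e(h)=\sum_i e(h_i)$ and $\mathcal{E}(S,h)=\sum_i \mathcal{E}(S,h_i)$. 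Equivariance causes no trouble: since the isometric action on $X$ splits as a product action, $h$ is $\rho$-equivariant if and only if each $h_i$ is equivariant for the corresponding factor, and the same passes to one-parameter families.

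The Hopf differential identity is then immediate. Formula (\ref{mhopf}) expresses $\phi(h)$ as a fixed $\mathbb{R}$-linear function of the components $g_{11}(h),g_{12}(h),g_{22}(h)$ of $g(h)$ in a holomorphic chart, so the additivity $g(h)=\sum_i g(h_i)$ yields $\phi(h)=\sum_i\phi(h_i)$.

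For the harmonicity equivalence I would work with first variations. A $\rho$-equivariant one-parameter family $h^t\colon\tilde S\to X$ with $h^0=h$ is the same data as an $n$-tuple of families $h_i^t\colon\tilde S\to M_i$ with $h_i^0=h_i$, and conversely; since $\mathcal{E}(S,h^t)=\sum_i\mathcal{E}(S,h_i^t)$ for every $t$, we get $\tfrac{d}{dt}\big|_{t=0}\mathcal{E}(S,h^t)=\sum_i\tfrac{d}{dt}\big|_{t=0}\mathcal{E}(S,h_i^t)$. If every $h_i$ is harmonic then each summand vanishes along every family, so $h$ is a critical point. Conversely, if $h$ is harmonic, fix $i$ and run families that move only the $i$-th coordinate while keeping the other components equal to $h_j$; the first variation of $\mathcal{E}(S,h^t)$ then equals that of $\mathcal{E}(S,h_i^t)$, which therefore vanishes along all such families, so $h_i$ is harmonic.

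The only step that is not purely formal is the additivity $g(h)=\sum_i g(h_i)$ of the Korevaar--Schoen tensor, and even that is essentially forced by $d^2=\sum_i d_i^2$; I expect this, together with the bookkeeping that ``a variation of $h$'' and ``an $n$-tuple of variations of the $h_i$'' really are interchangeable in the equivariant Lipschitz category, to be the main point to spell out carefully.
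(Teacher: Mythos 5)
Your proposal is correct and is exactly the argument the paper has in mind: the proposition is recorded without proof as a ``consequence of the definitions,'' namely the additivity $g(h)=\sum_i g(h_i)$ of the Korevaar--Schoen tensor coming from $d^2=\sum_i d_i^2$, the $\R$-linearity of the Hopf-differential formula (\ref{mhopf}) in $g$, and the fact that equivariant variations of $h$ are the same as independent equivariant variations of the components $h_i$, so criticality (equivalently, energy minimization in the NPC setting) holds for $h$ iff it holds for each $h_i$. No gap to report.
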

    Let $\phi_1,\dots, \phi_n$ be holomorphic quadratic differentials on $S$ and let $(M,d)$ be the product of the dual $\R$-trees, with product action $\rho$ and product of projection maps $\pi=(\pi_1,\dots, \pi_n):\tilde{S}\to (M,d).$ The energy functional $\mathbf{E}_\rho$ is the sum of the component energy functionals. Similar to the previous subsection, there is a degree $2^n$ branched covering $\tau: C\to S$ on which each $\phi_i$ lifts to a square, with the analogous universal property, and which comes with $n$ commuting holomorphic involutions that each negate a $1$-form. There is a product representation $\chi=(\chi_1,\dots, \chi_n):\pi_1(C)\to (\mathbb{R}^n,+)$ with energy functional $\mathbf{E}_\chi$ and equivariant harmonic function $h=(h_1,\dots, h_n):\tilde{C}\to \R^n$. Proposition \ref{product} gives the observation below.
    \begin{prop}
    $h$ is minimal if and only if $\pi$ is minimal if and only if $\sum_{i=1}^n \phi_i=0.$
    \end{prop}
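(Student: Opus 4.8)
The plan is to deduce the proposition directly from Proposition~\ref{product}, since harmonicity is automatic and the only real content is when a holomorphic quadratic differential vanishes. First I would record that, by Proposition~\ref{product}, a tuple of maps into a product of NPC spaces is harmonic exactly when each component is; each $\pi_i\colon\tilde S\to(T_i,2d_i)$ is harmonic by the construction of Section~\ref{dualtrees} and each $h_i\colon\tilde C\to\R$ is harmonic by the construction of Section~\ref{hfunctions}, so both $\pi=(\pi_1,\dots,\pi_n)$ and $h=(h_1,\dots,h_n)$ are harmonic, and minimality of each is equivalent to the vanishing of its Hopf differential. Thus the whole statement reduces to a computation of Hopf differentials.

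Next I would apply the additivity clause of Proposition~\ref{product} twice. For the product of dual trees $(M,d)=\prod_{i=1}^n(T_i,2d_i)$ this gives $\phi(\pi)=\sum_{i=1}^n\phi(\pi_i)=\sum_{i=1}^n\phi_i$, using that the Hopf differential of the dual-tree projection $\pi_i$ is $\phi_i$ in the normalization fixed in Section~\ref{dualtrees}; hence $\pi$ is minimal if and only if $\sum_{i=1}^n\phi_i=0$. For $\R^n=\prod_{i=1}^n\R$ it gives $\phi(h)=\sum_{i=1}^n\phi(h_i)=\sum_{i=1}^n\alpha_i^2=\tau^*\big(\sum_{i=1}^n\phi_i\big)$, using that $\phi(h_i)=\alpha_i^2=\tau^*\phi_i$ on the branched cover $\tau\colon C\to S$ (Section~\ref{hfunctions}). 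Since $\tau$ is a nonconstant branched covering, the pullback $\tau^*$ is injective on holomorphic quadratic differentials — a holomorphic quadratic differential vanishing off the branch locus vanishes identically — so $\phi(h)=0$ if and only if $\sum_{i=1}^n\phi_i=0$, i.e.\ $h$ is minimal if and only if $\sum_{i=1}^n\phi_i=0$. Chaining the two equivalences through the common condition $\sum_{i=1}^n\phi_i=0$ then proves the proposition.

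I do not expect a genuine obstacle; the statement is essentially bookkeeping with the additivity of the Hopf differential under products, which is exactly Proposition~\ref{product}. The two places that merit a sentence of care are: (i) that $\sum_{i=1}^n\phi(h_i)$, a priori a quadratic differential on $C$, is precisely $\tau^*$ of $\sum_{i=1}^n\phi_i$ and that $\tau^*$ is injective, both immediate from the universal property of $\tau$ recalled in Section~\ref{hfunctions} together with the identity principle for holomorphic objects; and (ii) consistency of normalizations — should the Hopf differential of $\pi_i$ (or of $h_i$) turn out to be a fixed positive multiple of $\phi_i$ rather than $\phi_i$ on the nose, this does not affect any vanishing statement and can be absorbed harmlessly. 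Apart from these remarks the argument is a two-line corollary of Proposition~\ref{product}.
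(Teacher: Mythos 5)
Your proof is correct and follows exactly the route the paper intends: the paper derives this proposition directly from Proposition~\ref{product} (componentwise harmonicity plus additivity of the Hopf differential), which is precisely your argument, with your remarks on the pullback $\tau^*$ being injective and on harmless normalization constants filling in the details the paper leaves implicit.
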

    Let us thus assume the $\phi_i$'s sum to zero. The main input toward Theorem \ref{main} is Proposition \ref{selfmapprop} below, which is used to turn variations of $h$ into variations of $\pi$. We will need to restrict to a class of variations. Set $\textrm{Var}_\tau(h)$ to be the space of $C^\infty$ functions $\dot{h}:\tilde{C}\to \R^n$ that are invariant under $\pi_1(C)$ and the lifts to $\tilde{C}$ of the $n$ holomorphic involutions. Of course, such $\dot{h}$ is equivalent to a function on $S$. 
\begin{prop}[Propositions 4.4, 4.6, and 5.1 of \cite{MSS}]\label{selfmapprop}
        Let $\dot{h}\in \textrm{Var}_\tau(h)$. For every $\epsilon>0$, there exists a $C^\infty$ path of Riemann surfaces $t\mapsto C_t$ and $C^\infty$ paths of $C^\infty$ maps $t\mapsto f_i^t:C\to C_t$ starting at the identity such that 
        \begin{equation}\label{selfmap}
            \frac{d^2}{dt^2}|_{t=0}\sum_{i=1}^n \mathbf{E}_{\chi_i}(C_t, h_i\circ (\tilde{f}_i^t)^{-1})\leq \frac{d^2}{dt^2}|_{t=0}A(h_t) +\epsilon,
        \end{equation}
        where $\tilde{f}_i^t$ is the lift to $\tilde{C}$. The Riemann surfaces $C_t$ descend through the branched cover $\tau$ to Riemann surface structures $S_t$ on $S$. Similarly, the $f_i^t$'s descend to $f_i^t:S\to S_t.$
    \end{prop}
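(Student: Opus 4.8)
The plan is to obtain the paths by deforming the branched minimal immersion $h$ in $\R^n$ and then ``re-folding'' its coordinate functions back onto the fixed dual trees. Fix $\dot h\in\textrm{Var}_\tau(h)$ and set $h_t=h+t\dot h$. Since $\dot h$ is invariant under $\pi_1(C)$ and the lifted involutions, $h_t$ is again $\chi$-equivariant and descends to $S$, and $t\mapsto A(h_t)$ is the ordinary area of the immersion $h_t$, with $\tfrac{d^2}{dt^2}|_{t=0}A(h_t)$ the classical second variation of area. Because $h$ is minimal, hence conformal, the pullback metric $h_t^*g_{\mathrm{eucl}}$ is nondegenerate for small $t$ away from the branch locus, and I would take $C_t$ to be $C$ equipped with the conformal class of $h_t^*g_{\mathrm{eucl}}$. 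By construction $h_t\colon C_t\to\R^n$ is conformal, so its energy equals its area; and since the energy density of a map to $\R^n$ is the sum of the energy densities of its coordinate functions (cf. Proposition \ref{product}),
\begin{equation*}
\sum_{i=1}^n \mathcal E(C_t,h_t^i)=\mathcal E(C_t,h_t)=A(h_t).
\end{equation*}
Invariance of $\dot h$ ensures the $C_t$ descend to the required surfaces $S_t$ on $S$.

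The functions $h_t^i$ are already $\chi_i$-equivariant, but their level-set foliations move with $t$, whereas $\mathbf E_{\chi_i}$ records the energy of the fixed foliation $\mathcal F_i$ dual to $\phi_i$. The purpose of the self-maps $f_i^t\colon C\to C_t$ is to reparametrize so that $h_i\circ(\tilde f_i^t)^{-1}$ retains the level-set foliation of the fixed $\mathcal F_i$ while tracking $h_t^i$. Away from the zeros of $\alpha_i$ both $h_i$ and $h_t^i$ are submersions, and for small $t$ I would build $f_i^t$ on this region by intertwining the two functions, so that $h_i\circ(\tilde f_i^t)^{-1}=h_t^i$ there; the $f_i^t$ are taken smooth in $t$ with $f_i^0=\id$, and extended across the singular points by interpolation. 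Comparing the two sides coordinatewise, the contributions over the complement of small neighbourhoods of the singular points cancel, leaving
\begin{equation*}
\sum_{i=1}^n \mathbf E_{\chi_i}\!\big(C_t,h_i\circ(\tilde f_i^t)^{-1}\big)\le A(h_t)+R(t),
\end{equation*}
where $R(t)$ is the net energy contributed near the singular points; it is smooth, vanishes at $t=0$, and can be forced to satisfy $R''(0)\le\epsilon$ by shrinking the neighbourhoods. Since then $A(h_t)+R(t)-\sum_{i}\mathbf E_{\chi_i}(C_t,h_i\circ(\tilde f_i^t)^{-1})$ is a smooth nonnegative function vanishing at $t=0$, it has a minimum there, and differentiating twice gives (\ref{selfmap}).

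Two features make this legitimate as a second-order statement. First, the whole family $(C_t,f_i^t)$ is smooth in $t$ and the $h_i$ are fixed, so the left-hand side of (\ref{selfmap}) is a smooth function of $t$ whose second derivative exists outright; crucially one never differentiates a minimal energy, only energies of explicit competitors, which sidesteps the open $C^2$ regularity question for extremal length from Section \ref{reg}. Second, at $t=0$ each $f_i^0=\id$ and $C_0=C$, so both sides reduce to $\mathcal E(C,h)=A(h)$, which is exactly what makes $t=0$ a minimum above. The involution-invariance encoded in $\textrm{Var}_\tau(h)$ then guarantees that the $C_t$ and $f_i^t$ descend through $\tau$ to give the claimed $S_t$ and $f_i^t\colon S\to S_t$.

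I expect the main obstacle to be the behaviour at the zeros of the $\alpha_i$, that is, the branch points of the minimal immersion and the prong singularities of the foliations $\mathcal F_i$. There $h_i$ fails to be a submersion, the level sets branch, and no global intertwining diffeomorphism need exist; moreover one must check that the induced conformal structures $C_t$ extend smoothly across the branch locus. The technical core (Propositions 4.4 and 4.6 of \cite{MSS}) is to build the $f_i^t$ across these points, smoothly in $t$ and equal to the identity at $t=0$, and then (Proposition 5.1) to estimate the energy they contribute on small disks about the singularities, proving that both $R(t)$ and $R''(0)$ can be made smaller than any prescribed $\epsilon$.
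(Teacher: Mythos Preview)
Your outline captures the essential mechanism---take $C_t$ to be the conformal class of the deformed immersion so that energy equals area, and realise each coordinate of the deformation as a reparametrisation $h_i\circ(\tilde f_i^t)^{-1}$---and you correctly isolate the zeros of the $\alpha_i$ as the only obstruction. The paper's sketch (following \cite{MSS}) differs from yours in one structural point, namely the order of operations. Rather than building the $f_i^t$ exactly away from the zeros and then interpolating across them, one \emph{first} replaces $\dot h$ by a nearby variation $\dot h'\in\textrm{Var}_\tau(h)$ that vanishes identically in a neighbourhood of every zero; the $\epsilon$ in (\ref{selfmap}) is entirely the cost of this replacement in the second variation of area. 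After that, the pullback of $\dot h'$ to $n$ vector fields on $\tilde C$ is globally defined (zero near the singularities), the flows $f_i^t$ are the identity there, and one has the exact equality
\[
\sum_{i=1}^n\mathcal E\big(C_t,\,h_i\circ(\tilde f_i^t)^{-1}\big)=A(h+t\dot h'),
\]
with no remainder $R(t)$ to estimate.

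This ordering buys two simplifications over your route. First, since $h+t\dot h'=h$ near the branch points, the conformal structure $C_t$ coincides with $C$ there, so the extension problem you flag never arises. Second, the only analytic input becomes a continuity statement for the second variation of area under $C^\infty$ cutoff of $\dot h$, rather than an energy estimate for interpolated diffeomorphisms on shrinking disks with respect to a moving conformal structure. Your version is not wrong in principle, but it front-loads exactly the difficulties that the cutoff trick is designed to dissolve.
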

    Very briefly: without perturbing the area too much, one can modify $\dot{h}$ to be zero in a neighbourhood of the zeros of the $\phi_i$'s. There is a canonical way to pull such a variation back to $n$ vector fields on the surface $\tilde{C}$, which then generate flows $t\mapsto f_i^t$. With respect to the conformal structure of the image of $h_t=(h_1\circ (f_1^t)^{-1},\dots, h_n\circ (f_n^t)^{-1})$, which we label $C_t,$ the energy of $h_t$ is equal to its area. By invariance properties of $\dot{h},$ everything can be chosen to descend to $\Sigma_g$. Such a self-maps variation gives a variation of $\pi$,
    \begin{equation}\label{pit}
        \pi_t=(\pi_1 \circ (f_1^t)^{-1},\dots, \pi_n\circ (f_n^t)^{-1}).
    \end{equation}
If $\tilde{f}_i^t$ also denotes the lift to $\tilde{S}$, then we can see by the local isometric factoring described in \ref{dualtrees}, or the folding map of \ref{hfunctions},
\begin{equation}\label{eneq}
    e(\pi_i\circ (\tilde{f}_i^t)^{-1})=e(h_i\circ (\tilde{f}_i^t)^{-1})
\end{equation}
(note that both densities descend to $S$, which is where the equation (\ref{eneq}) is defined). 

Finally, we will want destabilizing variations.
    \begin{defn}
         $h$ is $\tau$-unstable if there exists $\dot{h}\in \textrm{Var}_{\tau}(h)$ such that $$\frac{d^2}{dt^2}|_{t=0}A(h+t\dot{h})<0.$$
    \end{defn}
For all $g\geq 3$ and $n\geq 3$, we can take $C=S$: one can find abelian differentials on $S$ whose squares sum to $0$ that give an unstable minimal map. One way to produce such a map is to lift an unstable minimal surface in the $3$-torus to the universal covers. For $g=2$, it turns out that any equivariant minimal surface is stable. However, we proved
  \begin{thm}[Section 5.3 in \cite{MSS}]\label{g2}
        There exists a Riemann surface $S$ of genus $2$ with $\phi_1,\dots, \phi_n,$ $n\geq 3,$ summing to $0$ and which give a non-trivial branched cover $\tau:C\to S$ and a $\tau$-unstable minimal map $h:\tilde{C}\to \R^n.$
    \end{thm}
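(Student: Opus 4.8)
The plan is to reduce the statement to the strict negativity of the second variation of area at the minimal map $h$, and then to construct an example by ``folding down'' a known unstable minimal surface to genus-$2$ data.

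\emph{Step 1: reduction to an index form.} Since the target $\R^n$ is flat, the second variation of area at the (branched) minimal immersion $h$ along a normal variation with normal field $V$ has no ambient-curvature term and is
\[
Q(V)=\int_{C}\Big(|\nabla^{\perp}V|^{2}-\sum_{\alpha,\beta}\langle \mathrm{II}(e_{\alpha},e_{\beta}),V\rangle^{2}\Big)\,dA ,
\]
the integral being over the compact surface $C$ in the induced metric $g=\sum_{i}|\alpha_{i}|^{2}$, with $\mathrm{II}$ the second fundamental form and $\{e_{\alpha}\}$ a local orthonormal frame. This metric, and the associated Jacobi operator $-\Delta^{\perp}-\tilde{A}$, descend to $S$, since the $i$-th deck involution of $\tau$ negates $\alpha_{i}$ and hence fixes $|\alpha_{i}|^{2}$. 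Because the linear path $h+t\dot h$ realizes the tangent vector $\dot h$, and at a critical point the Hessian depends only on this vector with the tangential part contributing only a nonnegative (reparametrization) quantity, it suffices to exhibit genus-$2$ data $(S;\phi_{1},\dots,\phi_{n})$ with $\sum_{i}\phi_{i}=0$ and nontrivial canonical cover $\tau:C\to S$, together with a destabilizing normal mode $V$ that is equivariant for the deck action (so that it descends to $\textrm{Var}_{\tau}(h)$) and has $Q(V)<0$.

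\emph{Step 2: the example.} For $g\ge 3$ such configurations already exist with $C=S$ — for instance, lift to the universal cover a non-area-minimizing triply periodic minimal surface in a flat $3$-torus, whose coordinate $1$-forms are abelian differentials on $S$ with squares summing to $0$. No triply periodic minimal surface has quotient of genus $2$, so instead I would look for a high-genus unstable minimal surface $C$ of this kind that in addition carries a group $(\Z/2)^{n}$ of holomorphic automorphisms, the $i$-th generator acting on $\R^{n}$ by negating the $i$-th coordinate, and with $C/(\Z/2)^{n}$ of genus $2$; the hyperelliptic structure of genus-$2$ surfaces and the size of their automorphism groups leave room to arrange this (possibly letting some $\phi_{i}$ be squares already on $S$, so that $\tau$ has degree $<2^{n}$). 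The invariant densities $|\alpha_{i}|^{2}$ then come from abelian differentials on $C$ whose squares descend to quadratic differentials $\phi_{i}$ on $S$ with $\sum_{i}\phi_{i}=0$, and $\tau$ is precisely the canonical cover of this data. The last point is to produce a destabilizing mode that is equivariant for $(\Z/2)^{n}$: one takes $V$ from the most symmetric part of the negative spectrum of $-\Delta^{\perp}-\tilde{A}$, typically by symmetrizing a destabilizing field over the deck group after building the example so that the deck action respects a reflection preserved by the unstable mode.

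\emph{Main obstacle.} The genus-$2$ case is not ``free'': every equivariant minimal surface for the genus-$2$ surface group itself is stable, so one cannot invoke an existing unstable map on $S$, and the instability must appear only after passing to the cover $C$, while one simultaneously controls (i) the relation $\sum_{i}\phi_{i}=0$ on the fixed genus-$2$ surface, (ii) the compatibility of $\mathrm{Aut}(C)$ with the canonical cover structure, and (iii) the equivariance of the destabilizing mode. I expect the bulk of the work to be the explicit choice of the symmetric surface $C$ and its genus-$2$ quotient, together with the eigenvalue estimate certifying $Q(V)<0$ for an explicit equivariant $V$; the existence of unstable periodic minimal surfaces and standard facts about branched covers of quadratic differentials then supply the rest.
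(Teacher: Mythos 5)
Your proposal does not coincide with the paper's treatment: the paper gives no proof of this statement at all, quoting it directly from Section 5.3 of \cite{MSS}, where an explicit genus-$2$ configuration and an explicit invariant destabilizing variation are constructed and the second variation is actually estimated. Your outline correctly identifies the shape of what is required (genus $2$ forces quadratic differentials that are not globally squares, hence a non-trivial cover $\tau:C\to S$; the destabilizing field must be invariant under the deck involutions so that it lies in $\textrm{Var}_\tau(h)$), but it stops precisely where the content of the theorem begins: the surface $C$, the differentials $\phi_1,\dots,\phi_n$ with $\sum_i\phi_i=0$, and the invariant mode with $Q(V)<0$ are only described as things you ``would look for,'' with the eigenvalue estimate deferred. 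Since the theorem is an existence statement, this is a genuine gap, not a proof.

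The step that would actually fail as written is the symmetrization. Averaging a destabilizing normal field over the deck group $(\Z/2)^n$ may give zero, or a nonzero invariant field on which the index form is nonnegative: negativity of $Q$ on some variation does not pass to the invariant subspace, and the invariant subspace is exactly where instability is hard to achieve here, since for genus $2$ every equivariant minimal map with $C=S$ is stable. So the negative eigenspace of the Jacobi operator of a symmetric unstable cover could a priori consist entirely of non-invariant modes, and one must verify by computation that a negative direction survives among variations that are invariant, as $\R^n$-valued functions, under all $n$ involutions (note also that $\textrm{Var}_\tau(h)$ consists of invariant functions while the coordinates $h_i$ are anti-invariant, so a ``normal mode'' does not automatically qualify and its equivariance type must be checked). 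Producing such a configuration and certifying this negative invariant direction is the substance of \cite[Section 5.3]{MSS}, and it is the part missing from your argument.
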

    \subsection{Proof of Theorem \ref{main}}
Resuming the setup from Section \ref{minimal}, assume that $h$ is destabilized by some $\dot{h}\in \textrm{Var}_\tau(h)$. Choose $\epsilon>0$ small enough so that $$\frac{d^2}{dt^2}|_{t=0}A(h+t\dot{h})+\epsilon<0.$$ Applying Proposition \ref{selfmapprop}, defining $\pi_t$ as in (\ref{pit}) and using (\ref{eneq}), we arrive at the following. 
\begin{prop}\label{startingpoint}
    There exist $C^\infty$ paths of $C^\infty$ maps to Riemann surfaces $t\mapsto f_i^t:S\to S^t$ starting at the identity such that 
    \begin{equation}\label{smunstable}
        \frac{d^2}{dt^2}|_{t=0}\sum_{i=1}^n\mathcal{E}(S_t,\pi_i\circ (f_i^t)^{-1})\leq 2^{-n}\frac{d^2}{dt^2}|_{t=0}A(h+t\dot{h}) +\epsilon<0.
    \end{equation}
\end{prop}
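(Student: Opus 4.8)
The plan is to obtain Proposition~\ref{startingpoint} from Proposition~\ref{selfmapprop} by a bookkeeping translation of quantities on the branched cover $C$ into quantities on $S$; no new geometric content is needed beyond Proposition~\ref{selfmapprop} (that is, Propositions 4.4, 4.6, and 5.1 of \cite{MSS}) together with the $\tau$-instability supplied by Theorem~\ref{g2} for $g=2$ and by the torus construction for $g\geq 3$. First I would fix the data: holomorphic quadratic differentials $\phi_1,\dots,\phi_n$ on $S$ summing to $0$, the branched cover $\tau\colon C\to S$ of degree $2^n$, the equivariant harmonic function $h=(h_1,\dots,h_n)\colon\tilde C\to\R^n$, and a variation $\dot h\in\textrm{Var}_\tau(h)$ with $\frac{d^2}{dt^2}|_{t=0}A(h+t\dot h)<0$. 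Then I would choose $\epsilon>0$ small---how small is dictated by the bookkeeping below---so that the right-hand side of (\ref{smunstable}) comes out strictly negative.

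Next I would invoke Proposition~\ref{selfmapprop} for this $\dot h$ with the chosen tolerance, obtaining the $C^\infty$ paths $t\mapsto C_t$ and $t\mapsto f_i^t\colon C\to C_t$, their descents $t\mapsto S_t$ and $t\mapsto f_i^t\colon S\to S_t$ through $\tau$, the $\R$-tree variations $\pi_t$ as in (\ref{pit}), and the inequality (\ref{selfmap}). Each term of (\ref{selfmap}) must then be pushed down to $S$. By (\ref{eneq}) the densities $e(\pi_i\circ(\tilde f_i^t)^{-1})$ and $e(h_i\circ(\tilde f_i^t)^{-1})$ agree as functions on $S$; since the densities on $C_t$ are pulled back via $\tau\colon C_t\to S_t$ from these descents and $\tau$ has degree $2^n$, integration gives $\mathbf{E}_{\chi_i}(C_t,h_i\circ(\tilde f_i^t)^{-1})=2^n\,\mathcal{E}(S_t,\pi_i\circ(f_i^t)^{-1})$ for every $i$ and $t$. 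The Euclidean area forms of $h_t$ and of $h+t\dot h$ are likewise invariant under the deck involutions (each acts on $\R^n$ by negating a coordinate, an isometry), so they too descend, identifying $A(h_t)$ and $A(h+t\dot h)$ with $2^n$ times areas on $S$; and the construction behind Proposition~\ref{selfmapprop} relates $\frac{d^2}{dt^2}|_{t=0}A(h_t)$ to $\frac{d^2}{dt^2}|_{t=0}A(h+t\dot h)$ up to an error absorbed into $\epsilon$ (this is the role of the ``without perturbing the area too much'' modification of $\dot h$ near the zeros of the $\phi_i$).

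Substituting these relations into (\ref{selfmap}) and dividing by $2^n$ yields $$\frac{d^2}{dt^2}\Big|_{t=0}\sum_{i=1}^n\mathcal{E}(S_t,\pi_i\circ(f_i^t)^{-1})\leq 2^{-n}\frac{d^2}{dt^2}\Big|_{t=0}A(h+t\dot h)+\epsilon,$$ whose right-hand side is negative by the choice of $\epsilon$ together with $\frac{d^2}{dt^2}|_{t=0}A(h+t\dot h)<0$; this is (\ref{smunstable}). The one delicate point---what I would call the main obstacle in an otherwise routine deduction---is threading the two layers of tolerances (the $\epsilon$ in Proposition~\ref{selfmapprop}, which lives on the cover $C$, versus the $\epsilon$ wanted in (\ref{smunstable}) on the base $S$) together with the normalization constant $2^n$ in a consistent way, so that after descending to $S$ and dividing by $2^n$ the resulting bound is still strictly negative. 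Concretely, one should apply Proposition~\ref{selfmapprop} with tolerance small relative to $2^{-n}\big|\frac{d^2}{dt^2}|_{t=0}A(h+t\dot h)\big|$, and make sure every energy and every area is evaluated on the same surface before the factor $2^n$ is introduced.
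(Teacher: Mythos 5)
Your proposal is correct and follows essentially the same route as the paper: fix the destabilizing $\dot{h}$, choose the tolerance small, apply Proposition \ref{selfmapprop}, use (\ref{eneq}) and the deck-invariance of the energy densities to descend through the degree $2^n$ branched cover, and divide by $2^n$, with your explicit tracking of the tolerance against $2^{-n}\big|\frac{d^2}{dt^2}|_{t=0}A(h+t\dot{h})\big|$ being if anything more careful than the paper's one-line choice of $\epsilon$. The side remark that the area form of $h+t\dot{h}$ itself descends to $S$ is not quite immediate (since $\dot{h}$ is invariant under the involutions while $h_j$ is anti-invariant under the $j$-th one), but it is not needed: in the final inequality $A(h+t\dot{h})$ remains a quantity on the cover, with the descent accounted for by the explicit factor $2^{-n}$, exactly as in the paper.
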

    All recorded examples of $\tau$-unstable maps, in particular examples from Theorem \ref{g2}, come from differentials with even order zeros. Recalling our regularity concerns from Section \ref{reg}, we need the Proposition below. Say that a holomorphic quadratic differential $\phi$ is generic if it has only simple zeros. Generic quadratic differentials on $S$ form an open and dense subset. 
\begin{prop}\label{perturb}
    For all $g\geq 2,$ $n\geq 3,$ we can choose generic holomorphic quadratic differentials $\phi_i$ that give maps to $\R$-trees $\pi_i,$ and $C^\infty$ paths $t\mapsto f_i^t:S\to S_t$ starting at the identity such that (\ref{smunstable}) holds: $$\frac{d^2}{dt^2}|_{t=0}\sum_{i=1}^n\mathcal{E}(S_t,\pi_i\circ (f_i^t)^{-1})<0.$$
\end{prop}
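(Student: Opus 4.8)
The plan is to deduce Proposition~\ref{perturb} from Proposition~\ref{startingpoint} by perturbing the quadratic differentials while keeping the rest of the construction fixed. Proposition~\ref{startingpoint} provides a surface $S$, holomorphic quadratic differentials $\phi_1^0,\dots,\phi_n^0$ (which need not be generic: the known $\tau$-unstable minimal maps, including those of Theorem~\ref{g2}, come from differentials with even order zeros), the dual $\R$-tree maps $\pi_i^0$, a path of Riemann surfaces $t\mapsto S_t$, and paths $t\mapsto f_i^t:S\to S_t$ starting at the identity, such that $\frac{d^2}{dt^2}|_{t=0}\sum_{i=1}^n\mathcal{E}(S_t,\pi_i^0\circ(f_i^t)^{-1})<0$. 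I will keep $S$, the family $S_t$, and the maps $f_i^t$ exactly as produced, and claim that this strict inequality persists after replacing $\phi_1^0,\dots,\phi_n^0$ by any sufficiently nearby $n$-tuple of holomorphic quadratic differentials on $S$; since generic $n$-tuples are dense, some such replacement does the job. Note that the replacement differentials need not sum to zero and the product map they determine need not be minimal: Proposition~\ref{perturb} only asks for generic $\phi_i$, the maps $\pi_i$ they define, and $C^\infty$ paths $f_i^t$ realizing the displayed inequality.

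The key point will be the continuous dependence of the second variation on the $\phi_i$. Fixing $f_i^t$ and $S_t$, pull back the integrand by the diffeomorphism $f_i^t$ (a diffeomorphism for $t$ near $0$, which is all that matters): then $\mathcal{E}(S_t,\pi_i\circ(f_i^t)^{-1})$ equals the energy of the map $\pi_i$, held fixed as $t$ varies, computed with respect to the complex structure $c_i^t$ on $S$ obtained by pulling back that of $S_t$ via $f_i^t$. By Section~\ref{hmaps} this energy depends only on the complex structure of the domain; the smooth path $c_i^t$, which starts at the complex structure of $S$, depends only on $f_i^t$ and $S_t$, not on $\phi_i$. Describe $c_i^t$ by a smooth path of Beltrami forms $\mu_i^t$ (with $\mu_i^0=0$) relative to the complex structure of $S$. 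On the complement of $\phi_i^{-1}(0)$ the map $\pi_i$ is, up to a local isometry onto a segment, a fixed smooth real-valued function whose differential is built from $\sqrt{\phi_i}$ (cf.\ Sections~\ref{dualtrees} and~\ref{hfunctions}), so the energy density of $\pi_i$ with respect to $\mu_i^t$ is the standard rational expression in $\mu_i^t$ whose coefficients involve only $|\phi_i|$ and $\phi_i$. Differentiating twice in $t$ under the integral sign---legitimate because $|\mu_i^t|$ stays bounded away from $1$, $S$ is compact, and $\phi_i$ is bounded---yields
\begin{equation*}
\frac{d^2}{dt^2}\Big|_{t=0}\mathcal{E}(S_t,\pi_i\circ(f_i^t)^{-1})
=\int_S\Big(a\,|\phi_i|\,|\dot\mu_i^0|^2+b\,\textrm{Re}\big(\ddot\mu_i^0\,\phi_i\big)\Big)
\end{equation*}
for suitable real constants $a>0$ and $b$, where $\dot\mu_i^0$ and $\ddot\mu_i^0$ are the first two $t$-derivatives of $\mu_i^t$ at $0$ and do not depend on $\phi_i$. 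Since the space of holomorphic quadratic differentials on $S$ is finite dimensional, $\phi_i\mapsto|\phi_i|$ is continuous uniformly on $S$, so the right-hand side---hence $\sum_i\frac{d^2}{dt^2}|_{t=0}\mathcal{E}(S_t,\pi_i\circ(f_i^t)^{-1})$---depends continuously on $(\phi_1,\dots,\phi_n)$.

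To conclude, the continuous function $(\phi_1,\dots,\phi_n)\mapsto\sum_{i=1}^n\frac{d^2}{dt^2}|_{t=0}\mathcal{E}(S_t,\pi_i\circ(f_i^t)^{-1})$ on the finite-dimensional product of spaces of holomorphic quadratic differentials on $S$ is strictly negative at $(\phi_1^0,\dots,\phi_n^0)$ by Proposition~\ref{startingpoint}, hence strictly negative on a neighbourhood $U$. As recalled before the statement, the generic holomorphic quadratic differentials on $S$ form an open dense subset, so the $n$-tuples all of whose entries are generic are dense in the product; I pick such an $n$-tuple $(\phi_1,\dots,\phi_n)$ in $U$, let $\pi_i$ be the corresponding $\R$-tree maps, and keep the paths $f_i^t$. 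This proves the proposition, and the argument is valid for all $g\geq2$ and $n\geq3$ exactly because Proposition~\ref{startingpoint} is (invoking Theorem~\ref{g2} when $g=2$). The only genuinely delicate step is the middle one---writing the second variation as an integral functional that is visibly continuous in $\phi_i$---which requires the explicit conformal-variation formula for the energy of a fixed map, a little care at the zeros of $\phi_i$ since the target is merely a metric space, and the interchange of the two $t$-derivatives with the integral.
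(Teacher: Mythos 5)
Your continuity argument is essentially the paper's own: the paper also keeps the surfaces $S_t$ and maps $f_i^t$ from Proposition \ref{startingpoint}, expresses the second derivative through the Reich--Strebel formula (\ref{RSforr3}) as $4\,\textrm{Re}\sum_i\big(-\int_S\phi_i\cdot\alpha_i+\int_S|\phi_i|\cdot\beta_i\,dA\big)$ with $\alpha_i,\beta_i$ depending only on the Beltrami forms $\mu_i^t$ of the $f_i^t$, and then perturbs the $\phi_i$ slightly to generic ones, negativity being preserved because $\alpha_i,\beta_i$ are bounded. Your second-variation formula (with $a=8$, $b=-4$ in the paper's normalization) is exactly the specialization of (\ref{2var}) using $\mu_i^0=0$, so the analytic core of your proof is sound and matches the paper.

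The genuine divergence is your explicit decision that ``the replacement differentials need not sum to zero.'' The paper takes care to preserve the constraint $\sum_{i=1}^n\phi_i=0$: it perturbs $\phi_1,\dots,\phi_{n-1}$ to be generic, redefines $\phi_n:=-\phi_1-\dots-\phi_{n-1}$, and if necessary perturbs $\phi_n$ and re-solves for $\phi_{n-1}$. This is not pedantry: in the proof of Theorem \ref{main}, Proposition \ref{perturb} is invoked to produce generic differentials that still yield an equivariant \emph{minimal} map $\pi=(\pi_1,\dots,\pi_n)$, i.e.\ $\sum_i\phi_i=0$. Minimality is what makes $S$ a critical point of $\mathbf{E}_\rho$, giving the vanishing first derivatives in (\ref{firstd}); that vanishing is then used both to compare with the genuine energy functional via (\ref{decreasing}) to get (\ref{secondd}), and to argue that the second derivative depends only on the initial tangent vector, so the path can be replaced by an $m$-geodesic. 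If the perturbed $\phi_i$ do not sum to zero, none of these steps go through, so your version of the proposition, while a correct proof of the literal displayed inequality, does not deliver what the theorem's proof consumes. The repair is exactly the paper's bookkeeping trick (perturb $n-1$ differentials, solve for the last one, and note that genericity and the strict inequality survive sufficiently small adjustments), which fits into your continuity framework with no further work.
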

Morally, we're using that instability in the sense of (\ref{smunstable}) is an open property. To formalize the argument, we borrow a formula from Reich-Strebel.
\begin{prop}[Equation 1.1 in \cite{RS} and Proposition 3.1 in \cite{MSS}]
    Let $\pi:\tilde{S}\to (T,2d)$ be any equivariant map to an $\R$-tree with Hopf differential $\phi$ and let $f:S\to S'$ be any quasiconformal map to another Riemann surface $S'$ with lift $\tilde{f}$ to $\tilde{S}$ and Beltrami form $\mu$.
    \begin{equation}\label{RSforr3}
    \mathcal{E}(S,\pi\circ \tilde{f}^{-1}) -\mathcal{E}(S,\pi) =  -4\textrm{Re} \int_{S} \phi\cdot \frac{ \mu}{1-|\mu|^2}dxdy + 4\int_{S} |\phi_i|\cdot \frac{|\mu|^2}{1-|\mu|^2}dxdy.  
\end{equation}
\end{prop}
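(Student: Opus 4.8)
The plan is to reduce the statement to the classical Reich--Strebel length--area computation for a real-valued function, using the folding structure of Section~\ref{hfunctions}: once the tree-valued energy is replaced by an ordinary Dirichlet integral, the identity becomes a change-of-variables computation in the Beltrami coefficient $\mu$. Concretely, first I would fold. By Section~\ref{hfunctions} (after passing to the canonical double cover $\tau\colon C\to S$ if $\phi$ is not already a square, where all densities are invariant under the involution and descend to $S$), the projection $\pi$ factors as $h=p\circ\pi$ through the folding map $p\colon (T,2d)\to\R$, where $h$ is the real-valued function normalized so that $\phi(h)=\phi$, equivalently $h_z=\sqrt\phi$, and $e(h)=e(\pi)$. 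Because $p$ restricts to an isometry on each geodesic segment of $(T,2d)$ and $\tilde f^{-1}$ is differentiable almost everywhere, the precomposed map $\pi\circ\tilde f^{-1}$ folds through the same $p$ onto $h\circ\tilde f^{-1}$, and its Korevaar--Schoen energy density agrees almost everywhere with the Dirichlet density of the real-valued function $h\circ\tilde f^{-1}$, exactly as in (\ref{eneq}). (Here $\pi\circ\tilde f^{-1}$ is a map on $\tilde S'$ whose total energy is taken with respect to the conformal structure of $S'$; the change of variables below re-expresses it over $S$.) It therefore suffices to compute the difference of the Dirichlet energies of $h\circ f^{-1}$ on $S'$ and of $h$ on $S$; note that $h\circ f^{-1}$ need not be harmonic on $S'$, but its energy is still $\int_{S'}(|v_w|^2+|v_{\bar w}|^2)\,du\,dv$ for $v=h\circ f^{-1}$ and $w=u+iv$ a coordinate on $S'$.

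Next I would carry out the change of variables. Writing $w=f(z)$, so that $h(z)=v(w(z),\overline{w(z)})$, and using the Beltrami equation $f_{\bar z}=\mu f_z$, the chain rule gives the linear system
\begin{equation*}
h_z = f_z\,v_w + \bar\mu\,\overline{f_z}\,v_{\bar w}, \qquad h_{\bar z} = \mu f_z\,v_w + \overline{f_z}\,v_{\bar w},
\end{equation*}
whose determinant is the Jacobian $J_f=|f_z|^2(1-|\mu|^2)$. Inverting and substituting $h_z=\sqrt\phi$, $h_{\bar z}=\overline{\sqrt\phi}$ (so $(\sqrt\phi)^2=\phi$ and $|\sqrt\phi|^2=|\phi|$) yields
\begin{equation*}
|v_w|^2+|v_{\bar w}|^2 = \frac{|\sqrt\phi-\bar\mu\,\overline{\sqrt\phi}|^2 + |\overline{\sqrt\phi}-\mu\sqrt\phi|^2}{|f_z|^2(1-|\mu|^2)^2} = \frac{2\big(|\phi|(1+|\mu|^2)-2\,\textrm{Re}(\mu\phi)\big)}{|f_z|^2(1-|\mu|^2)^2}.
\end{equation*}
Multiplying by $du\,dv=J_f\,dx\,dy$ cancels the factors of $|f_z|^2$, and after identifying the normalization through the energy density $e(u)=\nu^{-1}(|u_z|^2+|u_{\bar z}|^2)$ this gives
\begin{equation*}
\mathcal{E}(S',\pi\circ\tilde f^{-1}) = \int_S \frac{2\big(|\phi|(1+|\mu|^2)-2\,\textrm{Re}(\mu\phi)\big)}{1-|\mu|^2}\,dx\,dy.
\end{equation*}

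Finally, using $\tfrac{1+|\mu|^2}{1-|\mu|^2}=1+\tfrac{2|\mu|^2}{1-|\mu|^2}$ to split off the $\mu=0$ term $\mathcal{E}(S,\pi)=2\int_S|\phi|\,dx\,dy$ produces exactly the claimed identity with coefficients $-4$ and $+4$; as a consistency check, its term linear in $\mu$ reproduces the first-derivative formula (\ref{dformula}). The step that requires genuine care---the main obstacle---is the folding identity $e(\pi\circ\tilde f^{-1})=e(h\circ\tilde f^{-1})$ for a map precomposed with a merely quasiconformal homeomorphism. For the smooth self-maps of (\ref{eneq}) this was immediate, but here $\tilde f^{-1}$ lies only in $W^{1,2}_{\mathrm{loc}}$, so one must argue measure-theoretically: off the measure-zero set of preimages of the zeros of $\phi$ and the vertices of $T$, the map $\pi$ factors locally through a segment, $\tilde f^{-1}$ is differentiable, and the two energy densities coincide pointwise; one must also check that $\pi\circ\tilde f^{-1}$ lies in the Korevaar--Schoen domain and that the exceptional set contributes nothing to the integral. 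Granting this, the remainder is the routine computation above.
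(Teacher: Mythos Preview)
The paper does not prove this proposition; it is quoted from Reich--Strebel and \cite{MSS} and used as a black box. Your computation is correct and is essentially the classical Reich--Strebel change-of-variables argument, so in that sense you have reproduced what the cited references do.

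One comment on your route: you globalize via the folding map $p\colon(T,2d)\to\R$ of Section~\ref{hfunctions}, passing to the double cover so that $\phi$ becomes a square and $\pi$ folds onto a single equivariant real harmonic function $h$. This is fine for the harmonic projection, which is the only case the paper actually uses, but it is more structure than the statement demands (``any equivariant map to an $\R$-tree with Hopf differential $\phi$''), and it creates exactly the obstacle you flag at the end, namely checking that $e(\pi\circ\tilde f^{-1})=e(h\circ\tilde f^{-1})$ survives precomposition by a merely quasiconformal map. The cleaner argument---and the one implicit in the references---is purely local: because the target is one-dimensional, the Korevaar--Schoen pullback tensor $g(\pi)$ has rank at most one almost everywhere, so in a neighbourhood of a.e.\ point there is a real function $u$ with $g(\pi)=du\otimes du$, hence $u_z^2=\phi$ and $e(\pi)=\nu^{-1}(|u_z|^2+|u_{\bar z}|^2)$ pointwise. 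Your linear-algebra computation with $\mu$ then applies verbatim to $u$ at each such point, and integrating gives (\ref{RSforr3}) directly---no global folding, no double cover, and no separate regularity argument for $\pi\circ\tilde f^{-1}$ beyond the chain rule for $W^{1,2}$ composed with quasiconformal maps. Your final algebraic manipulation and the consistency check against (\ref{dformula}) are correct as written.
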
 

\begin{proof}[Proof of Proposition \ref{perturb}]
Begin with the data $\phi_i,f_i^t$ from Proposition \ref{startingpoint}. The $\phi_i$'s may not be generic, but we know that (\ref{smunstable}) holds. Let $\mu_i^t$ be the Beltrami form of $f_i^t$, and $\alpha_i$ the $C^\infty$ $(1,-1)$-form and $\beta_i$ the $C^\infty$ function on $S$ described by $$\alpha_i(z) = \frac{d^2}{dt^2}\bigg|_{t=0} \frac{ \mu_i^t(z)}{1-|\mu_i^t(z)|^2}, \hspace{1mm} \beta_i(z)= \frac{d^2}{dt^2}\bigg|_{t=0} \frac{|\mu_i^t(z)|^2}{1-|\mu_i^t(z)|^2}.$$
 By the formula (\ref{RSforr3}),
\begin{align*}
\frac{d^2}{dt^2}\bigg|_{t=0} \sum_{i=1}^n \mathcal{E}(S_t,\pi_i\circ (f_i^t)^{-1}) &= \frac{d^2}{dt^2}\bigg|_{t=0} \sum_{i=1}^n \Big (\mathcal{E}(S_t,\pi_i\circ (f_i^t)^{-1}) - \mathcal{E}(S,\pi_i)\Big ) \\
    &=\frac{d^2}{dt^2}\bigg|_{t=0} 4\sum_{i=1}^n\Big (-\textrm{Re} \int_{S} \phi_i\cdot \frac{ \mu_i^t}{1-|\mu_i^t|^2}dxdy + \int_{S} |\phi_i|\cdot \frac{|\mu_i^t|^2}{1-|\mu_i^t|^2}dxdy\Big ).
\end{align*}
Taking the derivative into the integral,
\begin{equation}\label{2var}
    0>\frac{d^2}{dt^2}\bigg|_{t=0} \sum_{i=1}^n \mathcal{E}(S_t,\pi_i\circ (f_i^t)^{-1})=4\textrm{Re} \sum_{i=1}^n\Big (-\int_S \phi_i\cdot\alpha_i + \int_S |\phi_i|\cdot \beta_i dA\Big ).
\end{equation}
We perturb $\phi_1,\dots, \phi_{n-1}$ ever so slightly to be generic, and then redefine $$\phi_n:=-\phi_1-\dots - \phi_{n-1},$$ which is, of course, very close to the original $\phi_n$. If $\phi_n$ is not generic, then we perturb it to be. By openness, if our perturbation of $\phi_n$ is sufficiently small, then if we redefine $\phi_{n-1}$ so that the sum of the $\phi_i$'s is again zero, $\phi_{n-1}$ will still be generic. Since $\alpha_i$ and $\beta_i$ are uniformly bounded, it is clear from the right hand side of (\ref{2var}) that if the perturbations are small enough, then both sides of (\ref{2var}) remain negative. So, we can take these new $\phi_i$'s to be our holomorphic quadratic differentials, and keep the same Riemann surfaces $S_t$ and paths of $C^\infty$ maps $t\mapsto f_i^t:S\to S_t.$
\end{proof}
\begin{proof}[Proof of Theorem \ref{main}]
    We find an $\R$-tree such that energy can be decreased to second order. We will then invoke Proposition \ref{relator} to say that the same happens for the extremal length of the associated foliation.
    
    Our starting point is Theorem \ref{g2}: we fix a $\tau$-unstable minimal map from a branched cover of a Riemann surface $S$ of genus $g$. By Proposition \ref{perturb}, we can adjust the Hopf differentials of the component maps to obtain generic quadratic differentials $\phi_1,\dots, \phi_n$ that yield an action $\rho=(\rho_1,\dots, \rho_n)$ on a product of $\R$-trees and an equivariant minimal map $\pi=(\pi_1,\dots, \pi_n)$, as well as paths of $C^\infty$ maps $t\mapsto f_1^t,\dots, f_n^t:S\to S_t$ starting at the identity such that
    \begin{equation}\label{selfmaps}
        \frac{d^2}{dt^2}\bigg|_{t=0}\sum_{i=1}^n\mathcal{E}(S_t,\pi_i\circ (f_i^t)^{-1})<0.
    \end{equation}
Since the $\phi_i$'s are generic, their energy functionals on $\mathbf{T}_g$ are all real analytic. By the definition of minimality and (\ref{dformula}), 
    \begin{equation}\label{firstd}
        \frac{d}{dt}|_{t=0}\mathbf{E}_\rho(S_t)=0 \textrm{ and } \frac{d}{dt}\bigg|_{t=0}\sum_{i=1}^n\mathcal{E}(S_t,\pi_i\circ (f_i^t)^{-1})=0.
    \end{equation}
Since harmonic maps minimize energy, for all $t$, 
\begin{equation}\label{decreasing}
    \mathbf{E}_\rho(S_t)=\sum_{i=1}^n \mathbf{E}_{\rho_i}(S_t)\leq \sum_{i=1}^n\mathcal{E}(S_t,\pi_i\circ (f_i^t)^{-1}).
\end{equation}
It follows from (\ref{selfmaps}), (\ref{firstd}), and (\ref{decreasing}) that 
\begin{equation}\label{secondd}
    \frac{d^2}{dt^2}|_{t=0}\mathbf{E}_{\rho}(S_t)\leq \frac{d^2}{dt^2}\bigg|_{t=0}\sum_{i=1}^n \mathcal{E}(S_t,\pi_i\circ (f_i^t)^{-1})<0.
\end{equation}
    By the first equation in (\ref{firstd}), the left hand side of (\ref{secondd}) does not depend on the specific path in Teichm{\"u}ller space, but only on the initial tangent vector $\mu$. Thus, if we fix a metric $m$ in $\mathcal{C}$, we can replace the path with the geodesic for $m$ starting at $S$ and tangent to $\mu$ at time zero, say $t\mapsto S_t'$. Finally, since the energy splits into the energies of the component maps, for (\ref{secondd}) to hold along our path there must be at least one component representation $\rho_i$ such that $$\frac{d^2}{dt^2}|_{t=0}\mathbf{E}_{\rho_i}(S_t')<0.$$ If $\mathcal{F}$ is the measured foliation corresponding to $\rho_i,$ then by Proposition \ref{relator}, $$\frac{d^2}{dt^2}|_{t=0}\mathbf{EL}_{\mathcal{F}}(S_t')<0.$$ This completes the proof.
\end{proof}

\begin{remark}
   One could instead ask about convexity with respect to a connection: a function is convex with respect to a connection if the restriction to every geodesic for that connection is a convex function. This setting is considered in \cite{Bav}. Our proof shows that extremal length is not convex for any connection that admits $C^2$ geodesics through every tangent vector of $\mathbf{T}_g.$
\end{remark}

\begin{remark}\label{end}
    In principle, one can write down the tangent vectors explicitly. One begins with a $\tau$-unstable minimal surface and a destabilizing variation. For example, one could work in genus $3$, and take $C=S$ and any non-planar equivariant minimal map from $\tilde{S}\to \R^3$ with its destabilizing unit normal variation (see \cite[Section 5.3]{MSS}). The proofs of Propositions 4.4, 4.6, and 5.1 in \cite{MSS} explain how to build the flows $f_1^t,f_2^t,f_3^t.$ One can then try to compute the corresponding path $t\mapsto S_t$ and take the derivative at time zero, although the computation may be involved. 
\end{remark}
\end{section}

\bibliographystyle{plain}
\bibliography{bibliography}

\end{document}